\documentclass[12pt,reqno]{amsart}%
\usepackage{amssymb}
\usepackage{amsfonts}
\usepackage{amsmath}
\usepackage{graphicx}
\usepackage{geometry}%
\setcounter{MaxMatrixCols}{30}
%TCIDATA{OutputFilter=latex2.dll}
%TCIDATA{Version=5.00.0.2552}
%TCIDATA{CSTFile=amsartci.cst}
%TCIDATA{Created=Monday, October 22, 2007 16:28:08}
%TCIDATA{LastRevised=Wednesday, February 04, 2009 17:58:13}
%TCIDATA{<META NAME="GraphicsSave" CONTENT="32">}
%TCIDATA{<META NAME="SaveForMode" CONTENT="1">}
%TCIDATA{<META NAME="DocumentShell" CONTENT="Articles\SW\AMS Journal Article">}
%TCIDATA{Language=American English}
\newtheorem{theorem}{Theorem}
\newtheorem*{thmK}{Theorem K}
\newtheorem*{thmM}{Theorem M}
\theoremstyle{plain}

\newtheorem{case}{Case}

\newtheorem{corollary}{Corollary}

\newtheorem{lemma}{Lemma}

\numberwithin{equation}{section}
\setlength{\oddsidemargin}{0in}
\setlength{\evensidemargin}{0in}
\setlength{\textwidth}{6.5in}
\setlength{\topmargin}{0in}
\setlength{\textheight}{8.5in}
\begin{document}
\title[An inequality for Kruskal-Macaulay functions]{An inequality for Kruskal-Macaulay functions}
\author{Bernardo M. \'{A}brego}
\address[B. M. \'{A}brego and S. Fern\'{a}ndez-Merchant]{Department of Mathematics\\
California State University, Northridge at 18111 Nordhoff Street, Northridge,
CA 91330}
\email{bernardo.abrego@csun.edu and silvia.fernandez@csun.edu}
\author{Silvia Fern\'{a}ndez-Merchant}
\curraddr{(B. M. \'{A}brego and S. Fern\'{a}ndez-Merchant) \textsc{Centro de
Investigaci\'{o}n en Matem\'{a}ticas, A.C., Guanajuato, Gto., Mexico}}
\author{Bernardo Llano}
\address[B. Llano]{Departamento de Matem\'{a}ticas \\
Universidad Aut\'{o}noma Metropolitana, Iztapalapa, San Rafael Atlixco 186,
Colonia Vicentina, 09340, M\'{e}xico, D.F.}
\email{llano@xanum.uam.mx}
\thanks{This paper is in final form and no version of it will be submitted for
publication elsewhere.}
\date{January 22, 2009}
\subjclass[2000]{Primary 05A05; Secondary 05A20}
\keywords{binomial representation of a positive integer, Kruskal-Macaulay function,
shadow of a set}

\begin{abstract}
Given integers $k\geq1$ and $n\geq0$, there is a unique way of writing $n$ as
$n=\binom{n_{k}}{k}+\binom{n_{k-1}}{k-1}+...+\binom{n_{1}}{1}$ so that $0\leq
n_{1}<\cdots<n_{k-1}<n_{k}$. Using this representation, the
\emph{Kruskal-Macaulay function of }$n$ is defined as $\partial^{k}\left(
n\right)  =\binom{n_{k}-1}{k-1}+\binom{n_{k-1}-1}{k-2}+...+\binom{n_{1}-1}%
{0}.$ We show that if $a\geq0$ and $a<\partial^{k+1}\left(  n\right)  $, then
$\partial^{k}\left(  a\right)  +\partial^{k+1}\left(  n-a\right)  \geq
\partial^{k+1}\left(  n\right)  .$ As a corollary, we obtain a short proof of
Macaulay's Theorem. Other previously known results are obtained as direct consequences.

\end{abstract}
\maketitle

\section{Introduction}

Given integers $k\geq1$ and $n\geq0$, there is a unique way of writing $n$ as
\begin{equation}
n=\binom{n_{k}}{k}+\binom{n_{k-1}}{k-1}+...+\binom{n_{2}}{2}+\binom{n_{1}}{1}
\label{binomial rep}%
\end{equation}
so that $0\leq n_{1}<n_{2}<\cdots<n_{k-1}<n_{k}$. Using this representation,
called the $k$\emph{-binomial representation of }$n$, the
\emph{Kruskal-Macaulay function of }$n$ is defined as%
\[
\mathbf{\partial}^{k}\left(  n\right)  =\binom{n_{k}-1}{k-1}+\binom{n_{k-1}%
-1}{k-2}+...+\binom{n_{2}-1}{1}+\binom{n_{1}-1}{0}\text{.}%
\]
(see \cite{And, CL69, Knu05, Mac27} for details.) The main goal of this paper
is to prove the following inequality for Kruskal-Macaulay functions and show
some of its consequences.

\begin{theorem}
\label{technical}Let $k,a,$ and $n$ be integers such that $k\geq1$ and $n\geq
a\geq0$. If $a<$ $\mathbf{\partial}^{k+1}\left(  n\right)  $, then%
\begin{equation}
\mathbf{\partial}^{k}\left(  a\right)  +\mathbf{\partial}^{k+1}\left(
n-a\right)  \geq\mathbf{\partial}^{k+1}\left(  n\right)  .
\label{technical inequality}%
\end{equation}
Moreover, if $n=\binom{N}{k+1}$ for some $N\geq k+1,$ then the equality in
(\ref{technical inequality}) occurs only when $a=0.$
\end{theorem}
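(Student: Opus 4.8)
The plan is to argue by induction on $n$, with a secondary induction on $a$. Two facts about $\partial^{k}$ drive everything. First, $\partial^{k}$ is nondecreasing and $\partial^{k-1}(x)\le\partial^{k}(x)$ for all $x\ge 0$; both follow termwise from the shape of the binomial representation together with $\binom{M}{j}\ge\binom{M-1}{j-1}$. Second, and crucially, a \emph{unit-step recursion}: writing the $(k+1)$-binomial representation of $n\ge 1$ with smallest nonzero term $\binom{n_{j}}{j}$ (so $n_{j}\ge j\ge 1$), one has $\partial^{k+1}(n)-\partial^{k+1}(n-1)=1$ precisely when that smallest term equals $1$ (i.e.\ $n_{j}=j$), and $\partial^{k+1}(n)-\partial^{k+1}(n-1)=0$ otherwise. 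This is a one-line computation: apply $\binom{M}{j}=\binom{M-1}{j}+\binom{M-1}{j-1}$ repeatedly to the bottom term to expand the representation of $n-1$, and compare. I will also keep at hand the ``leading-term'' recursion $\partial^{k+1}\!\bigl(\binom{N}{k+1}+m\bigr)=\binom{N-1}{k}+\partial^{k}(m)$, valid for $0\le m<\binom{N}{k}$, which again is immediate from $\binom{N}{k+1}=\binom{N-1}{k+1}+\binom{N-1}{k}$.

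Set $\delta_{n}=\partial^{k+1}(n)-\partial^{k+1}(n-1)\in\{0,1\}$ and $\delta'_{a}=\partial^{k}(a)-\partial^{k}(a-1)\in\{0,1\}$. The case $a=0$ is trivial, with equality. Assume $a\ge 1$. If $\delta_{n}=0$, then $a<\partial^{k+1}(n)=\partial^{k+1}(n-1)$ and $a\le n-1$, so the hypothesis at $(n-1,a)$ gives $\partial^{k}(a)+\partial^{k+1}(n-1-a)\ge\partial^{k+1}(n-1)$, and monotonicity gives $\partial^{k+1}(n-a)\ge\partial^{k+1}(n-1-a)$, finishing this case. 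If $\delta_{n}=1$ and $\delta'_{a}=1$, then $a-1<\partial^{k+1}(n)-1=\partial^{k+1}(n-1)$ and $a-1\le n-1$, so the hypothesis at $(n-1,a-1)$ gives $\partial^{k}(a-1)+\partial^{k+1}(n-a)\ge\partial^{k+1}(n-1)$, and adding the two unit steps $\delta'_{a}$, $\delta_{n}$ yields $\partial^{k}(a)+\partial^{k+1}(n-a)\ge\partial^{k+1}(n-1)+1=\partial^{k+1}(n)$. The only remaining case is $\delta_{n}=1$, $\delta'_{a}=0$; then $\partial^{k}(a)=\partial^{k}(a-1)$, so if also $\delta_{n-a+1}=0$ the secondary hypothesis at $(n,a-1)$ closes it, because $\partial^{k+1}(n-a+1)=\partial^{k+1}(n-a)$ in that event. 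The genuinely delicate sub-case is $\delta_{n}=1$, $\delta'_{a}=0$, $\delta_{n-a+1}=1$. Here I would switch to the leading-term recursion: write $n=\binom{N}{k+1}+m$ with $0\le m<\binom{N}{k}$ and split on $a\le\binom{N-1}{k}$ versus $a>\binom{N-1}{k}$. If $a>\binom{N-1}{k}$, put $a=\binom{N-1}{k}+a'$; one checks $a'<\partial^{k}(m)<\binom{N-1}{k-1}$ (using monotonicity and $\partial^{k}(\binom{N}{k}-1)=\binom{N-1}{k-1}-1$), expands both sides of the desired inequality by the recursion, and reduces to the hypothesis at $(m,a')$ at level $k-1$, possibly together with one further application of the subadditivity estimate below. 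If $a\le\binom{N-1}{k}$, the inequality becomes, after cancelling the $\binom{N-1}{k}$-type terms, either $\partial^{k}(x)+\partial^{k}(y)\ge\partial^{k}(x+y)$ (when $a\le m$) or the ``balancing'' estimate $\partial^{k}(p+c)+\partial^{k}(q-c)\ge\partial^{k}(p)+\partial^{k}(q)$ for $p+c\le q$ (when $a>m$). Both of these I would establish by the same induction (the balancing estimate reduces to subadditivity, and subadditivity follows from the hypothesis at level $k-1$ combined with $\partial^{k-1}\le\partial^{k}$ on the range $x<\partial^{k}(x+y)$, plus a short direct check on the complementary range). The base cases are $n$ small and $k=1$, where $\partial^{1}(x)=1$ for $x\ge 1$ and the inequality is immediate.

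For the equality statement, take $n=\binom{N}{k+1}$ with $N\ge k+1$, so $m=0$, $\partial^{k+1}(n)=\binom{N-1}{k}$, and every admissible $a$ satisfies $a<\binom{N-1}{k}$. For $a\ge 1$ the leading recursion gives $n-a=\binom{N-1}{k+1}+\bigl(\binom{N-1}{k}-a\bigr)$, hence
$\partial^{k}(a)+\partial^{k+1}(n-a)-\partial^{k+1}(n)=\partial^{k}(a)+\partial^{k}\!\bigl(\binom{N-1}{k}-a\bigr)-\partial^{k}\!\bigl(\binom{N-1}{k}\bigr)$.
So equality would force the splitting $\binom{N-1}{k}=a+\bigl(\binom{N-1}{k}-a\bigr)$ to leave $\partial^{k}$ unchanged; but for $a\ge 1$ this is impossible (for instance $\partial^{k}(a)\ge\partial^{k}(1)=1$, while the decrease of $\partial^{k}$ from $\binom{N-1}{k}$ to $\binom{N-1}{k}-a$ is strictly less than $\partial^{k}(a)$ — a strict form of the subadditivity estimate at a pure binomial value, which the same induction delivers). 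Hence equality occurs only at $a=0$.

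The step I expect to be the main obstacle is exactly the sub-case $\delta_{n}=1$, $\delta'_{a}=0$, $\delta_{n-a+1}=1$: one must keep every reduction strictly decreasing (so the double induction on $(n,a)$ stays well-founded) while simultaneously proving the auxiliary subadditivity and balancing inequalities for $\partial^{k}$, which are true but, because $\partial^{k}$ is a ragged step function, do not follow from monotonicity alone and have to be carried along inside the same induction. Getting the bookkeeping of ``which of the two functions jumps where'' exactly right, and pinning down the strict-increment argument for the equality case, is where the real work lies.
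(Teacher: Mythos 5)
Your setup is fine as far as it goes: the unit-step fact ($\partial^{k}(a+1)-\partial^{k}(a)\in\{0,1\}$, with the jump occurring exactly when the bottom term of the representation of $a+1$ is $\binom{j}{j}$) is the paper's Lemma \ref{short}, and the leading-term recursion $\partial^{k+1}\bigl(\binom{N}{k+1}+m\bigr)=\binom{N-1}{k}+\partial^{k}(m)$ for $0\le m<\binom{N}{k}$ is correct. The three easy sub-cases ($\delta_{n}=0$; $\delta_{n}=1,\delta'_{a}=1$; $\delta_{n}=1,\delta'_{a}=0,\delta_{n-a+1}=0$) do reduce cleanly to smaller instances. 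But the proof is not complete: everything is funneled into the sub-case $\delta_{n}=1$, $\delta'_{a}=0$, $\delta_{n-a+1}=1$, and there you do not prove anything — you list a plan whose load-bearing ingredients are an unconditional subadditivity $\partial^{k}(x)+\partial^{k}(y)\ge\partial^{k}(x+y)$ and a ``balancing'' inequality, both of which are only asserted. These are not consequences of monotonicity, they are not instances of the theorem being proved (the theorem mixes $\partial^{k}$ with $\partial^{k+1}$ and carries the hypothesis $a<\partial^{k+1}(n)$, which your auxiliary statements drop), and your sketch of how to get subadditivity (``hypothesis at level $k-1$ plus a short direct check on the complementary range $x\ge\partial^{k}(x+y)$'') does not work as stated: on that complementary range you have no lower bound on $\partial^{k}(x)$ to play against $\partial^{k}(x+y)$. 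Moreover, once these auxiliary inequalities are folded ``into the same induction,'' you never specify the well-founded order on $(k,n,a)$ together with the auxiliary statements that makes all the cross-references terminate. The same problem recurs in the equality clause: your reduction correctly turns it into the claim $\partial^{k}(a)+\partial^{k}\bigl(\binom{N-1}{k}-a\bigr)>\partial^{k}\bigl(\binom{N-1}{k}\bigr)$ for all $1\le a<\binom{N-1}{k}$, but this is a strict subadditivity statement with the \emph{same} superscript on both terms and with $a$ allowed up to $\binom{N-1}{k}$ rather than up to $\partial^{k}\bigl(\binom{N-1}{k}\bigr)=\binom{N-2}{k-1}$, so it is not delivered by the theorem one level down, and it is left unproven.

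For comparison, the paper closes exactly this hard regime by a different mechanism that avoids any unconditional subadditivity. It splits on whether $a_{k}<b_{k+1}$ or not (where $b=n-a$). When $a_{k}<b_{k+1}$ it takes a pair $(a,b)$ minimizing $\partial^{k}(a)+\partial^{k+1}(b)$ with $a$ smallest, and derives a contradiction either by the unit-step lemma (when the bottom term of $a$ is $\binom{v}{v}$) or by an explicit swap: using the \emph{extended} $k$-binomial representation of $a$, it interleaves the parts of $a$ and $b$ into $\alpha=\sum_{i}\binom{\min(a'_{i},b_{i})}{i}$ and $\beta=n-\alpha$, which keeps $\partial^{k}(\alpha)+\partial^{k+1}(\beta)$ equal to $\partial^{k}(a)+\partial^{k+1}(b)$ while strictly decreasing $a$. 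When $b_{k+1}\le a_{k}$, a separate lemma pins down $a_{k}=b_{k+1}=n_{k+1}-1$, and the leading binomial coefficients are stripped so the whole problem descends to $k-1$. If you want to salvage your route, you must either prove the subadditivity and balancing inequalities for $\partial^{k}$ independently (with a correctly founded induction) or replace that sub-case by an argument of the swap/stripping type; as written, the central case and the equality case both rest on unproved claims.
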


Kruskal-Macaulay functions are relevant for their applications to the study of
antichains in multisets (see for example \cite{Knu05, And}), posets, rings and
polyhedral combinatorics (see \cite{BB97} and the survey \cite{BL04}). In
particular, they play and important role in proving results, extensions and
generalizations of classical problems concerning the Kruskal-Katona
\cite{Kru63, Kat66, Wegner}, Macaulay \cite{Mac27}, and Erd\H{o}s-Ko-Rado
\cite{EKR61} theorems. More recently, the authors \cite{AFL07}, applied
Theorem \ref{technical} to the problem of finding the maximum number of
translated copies of a pattern that can occur among $n$ points in a
$d$-dimensional space, a typical problem related to the study of repeated
patterns in Combinatorial Geometry. For every $P\subseteq\mathbb{R}^{d}$, a
fixed finite point set (called a \textit{pattern}) we say that $P$ is a
\emph{rational simplex} if all the points of $P$ are rationally affinely
independent. In \cite{AFL07}, we proved that the maximum number of translated
copies of a rational simplex $P$ with $|P|=k+1$, determined by a set of $n$
points of $\mathbb{R}^{d}$, is equal to $n-\mathbf{\partial}^{k}\left(
n\right)  .$

We now introduce some terminology needed to state the Kruskal-Katona and
Macaulay Theorems. Let $M_{k}$ and $S_{k}$ denote the set of nonincreasing,
respectively decreasing, sequences of natural integers of length $k$, i.e.,%
\begin{align*}
M_{k}  &  =\left\{  \left(  x_{1},x_{2},...,x_{k}\right)  \in\mathbb{N}%
^{k}:x_{1}\geq x_{2}\geq...\geq x_{k}\geq1\right\} \\
S_{k}  &  =\left\{  \left(  x_{1},x_{2},...,x_{k}\right)  \in\mathbb{N}%
^{k}:x_{1}>x_{2}>...>x_{k}>1\right\}  \text{.}%
\end{align*}
If $A\subseteq M_{k}$ (or $S_{k})$, then the \emph{shadow} of $A$, denoted by
$\partial A$, consists of all nonincreasing (decreasing) subsequences of
length $k-1$ of elements of $A$ ($\partial(\emptyset)=\emptyset$). That is,%
\[
\partial A=\left\{  x:x\text{ is a subsequence of }y\text{ of length
}k-1\text{, for some }y\in A\right\}  .
\]
By analogy, one can think of $M_{k}$ (or $S_{k}$) as multisets (or sets) of
size $k$, with positive integers as elements. In this context $\partial A$
consists of the subsets of multisets (or sets) in $A$ of cardinality $k-1$.

The Kruskal-Katona function, $\partial_{k}$ (defined below), is the analogue
of the Kruskal-Macaulay function defined before. For $n$ as in
(\ref{binomial rep}),%
\[
\mathbf{\partial}_{k}\left(  n\right)  =\binom{n_{k}}{k-1}+\binom{n_{k-1}%
}{k-2}+...+\binom{n_{2}}{1}+\binom{n_{1}}{0}\text{.}%
\]
The sets of sequences $M_{k}$ and $S_{k}$ are \emph{lexicographically
ordered}. That is, for $x$ and $y$ in $M_{k}$ (or $S_{k}$), $x\prec y$ if for
some index $i$, $x_{i}<y_{i}$ and $x_{j}=y_{j}$ whenever $j<i$. There is an
important relationship between shadows of multisets and sets and the functions
$\partial^{k}$ and $\partial_{k}$. Namely, if we denote by $FM_{k}(n)$ and
$FS_{k}(n)$ the first $n$ members, in lexicographic order, of $M_{k}$ and
$S_{k}$, respectively; then%
\begin{equation}
\left\vert \partial FM_{k}\left(  n\right)  \right\vert =\mathbf{\partial}%
^{k}\left(  n\right)  \text{ and }\left\vert \partial FS_{k}\left(  n\right)
\right\vert =\mathbf{\partial}_{k}\left(  n\right)
.\label{shadow vs macaulay}%
\end{equation}

The Kruskal-Katona and Macaulay Theorems show that in fact $\partial_{k}(n)$
and $\partial^{k}(n)$ are the best lower bounds for the shadow of a set with
$n$ elements,

\begin{thmK}
\emph{(Kruskal \cite{Kru63}-Katona \cite{Kat66})} Let $k\geq0$; for every
$A\subseteq S_{k+1}$,%
\[
\left\vert \partial A\right\vert \geq\left\vert \partial FS_{k+1}\left(
\left\vert A\right\vert \right)  \right\vert =\partial_{k+1}\left(  \left\vert
A\right\vert \right)  \text{.}%
\]

\end{thmK}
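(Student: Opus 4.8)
The plan is to prove the inequality $|\partial A|\ge\partial_{k+1}(|A|)$ by induction on the largest integer $m$ that occurs as an entry of a sequence of $A$, viewing the members of $S_{k+1}$ as $(k+1)$-element sets. The base case (with $A$ empty, or the ground set too small to hold more than one set) is immediate, the empty family giving $\partial_{k+1}(0)=0$. For the inductive step I would fix $m$ and split $A$ according to whether the entry $m$ is present: set $\mathcal C=\{F\in A:m\in F\}$, $\mathcal D=\{F\in A:m\notin F\}$, and $\mathcal C'=\{F\setminus\{m\}:F\in\mathcal C\}$, so that $\mathcal C'$ is a family of $k$-sets and $\mathcal D$ a family of $(k+1)$-sets, each with ground set contained in $\{1,\dots,m-1\}$. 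Write $n=|A|$ and $a=|\mathcal C'|=|\mathcal C|$, so $|\mathcal D|=n-a$.

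The engine of the step is a clean description of the shadow under this split. A $k$-set $H\in\partial A$ either contains $m$ or not. Deleting $m$ sets up a bijection between the members of $\partial A$ that contain $m$ and the members of $\partial\mathcal C'$, while the members of $\partial A$ avoiding $m$ are exactly those of $\mathcal C'\cup\partial\mathcal D$. As these two classes are disjoint,
\begin{equation*}
|\partial A|=|\partial\mathcal C'|+|\mathcal C'\cup\partial\mathcal D|.
\end{equation*}
Since $\mathcal C'$ and $\mathcal D$ have strictly smaller ground sets, the induction hypothesis applies to both and gives $|\partial\mathcal C'|\ge\partial_{k}(a)$ and $|\partial\mathcal D|\ge\partial_{k+1}(n-a)$.

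It remains to combine these two estimates, and this is precisely where a numerical inequality of the shape of Theorem \ref{technical} enters, but written for the Kruskal--Katona function $\partial_k$ in place of the Kruskal--Macaulay function $\partial^k$. I would split on the size of $a$. If $a\ge\partial_{k+1}(n)$, then using only $|\mathcal C'\cup\partial\mathcal D|\ge|\mathcal C'|=a$ already yields $|\partial A|\ge a\ge\partial_{k+1}(n)$. If instead $a<\partial_{k+1}(n)$, then from $|\mathcal C'\cup\partial\mathcal D|\ge|\partial\mathcal D|$ and the induction hypothesis one gets $|\partial A|\ge\partial_{k}(a)+\partial_{k+1}(n-a)$, and the target bound $|\partial A|\ge\partial_{k+1}(n)$ follows from the inequality $\partial_{k}(a)+\partial_{k+1}(n-a)\ge\partial_{k+1}(n)$ for $0\le a<\partial_{k+1}(n)$. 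The relation $|\partial FS_{k+1}(n)|=\partial_{k+1}(n)$ from \eqref{shadow vs macaulay} then certifies that the lexicographic initial segment attains the bound, completing the argument.

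The main obstacle is exactly that last numerical inequality, the set-theoretic counterpart of Theorem \ref{technical}; note that its hypothesis $a<\partial_{k+1}(n)$ is not incidental, since it delimits precisely the regime in which the trivial estimate $|\partial A|\ge a$ fails, which is why the inequality is invoked only there. I would establish it by the same manipulation of $k$-binomial representations used for Theorem \ref{technical}, now with the coefficients $\binom{n_i}{i-1}$ of $\partial_k$ rather than the coefficients $\binom{n_i-1}{i-1}$ of $\partial^k$; the monotonicity and subadditivity of $\partial_k$ read off from its binomial expansion are what allow the induction on $m$ to close.
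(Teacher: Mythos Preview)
The paper does not prove Theorem~K at all; it is stated only for context, with a pointer to Eckhoff--Wegner \cite{EW} and Daykin \cite{Daykin} for a proof via the inequality \eqref{kruskal ineq}. So there is no ``paper's own proof'' to compare against. What you have written is, in outline, precisely the Eckhoff--Wegner argument the paper alludes to, and it is the set-analogue of the paper's proof of Theorem~M: split off the extremal element, decompose the shadow into two disjoint pieces, apply induction to the pieces, and close with a numerical inequality of the shape of Theorem~\ref{technical}.

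Two points deserve tightening. First, your induction is framed as ``induction on the largest integer $m$'', but to invoke $|\partial\mathcal C'|\ge\partial_k(a)$ you need the theorem for $S_k$, not $S_{k+1}$. The cleanest fix is to induct on $k+|A|$ (as the paper does for Theorem~M) or on $k+m$; either way the statement must be carried for all $k$ simultaneously. Second, and more substantively, the ``main obstacle'' you flag---the inequality $\partial_k(a)+\partial_{k+1}(n-a)\ge\partial_{k+1}(n)$ for $a<\partial_{k+1}(n)$---is left as an analogy to Theorem~\ref{technical}. That analogy is correct and the proof does go through with $\binom{n_i}{i-1}$ in place of $\binom{n_i-1}{i-1}$, but this is where all the work lies; the paper's Section~3 (Lemmas~\ref{short}--\ref{lemma comparisons2} and the two-case argument) would need to be rerun for $\partial_k$, and the extended-representation trick in Case~1 requires some care because the boundary conditions shift. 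Your last sentence also overstates things slightly: what closes the induction is the specific superadditivity inequality above, not generic ``monotonicity and subadditivity'' of $\partial_k$.
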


\begin{thmM}
\emph{(Macaulay \cite{Mac27})} Let $k\geq0$; for every $A\subseteq M_{k+1}$,%
\[
\left\vert \partial A\right\vert \geq\left\vert \partial FM_{k+1}\left(
\left\vert A\right\vert \right)  \right\vert =\partial^{k+1}\left(  \left\vert
A\right\vert \right)  \text{.}%
\]

\end{thmM}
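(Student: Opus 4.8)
The plan is to derive Macaulay's Theorem directly from Theorem \ref{technical} by a double induction, the outer one on $k$ and the inner one on $n=|A|$. By (\ref{shadow vs macaulay}) it suffices to prove $|\partial A|\geq\partial^{k+1}(n)$. The cases $n=0$ and $k=0$ are immediate (for $k=0$ the shadow of a nonempty $A\subseteq M_1$ is the single empty sequence), so I fix $k\geq1$ and $A\subseteq M_{k+1}$ with $|A|=n\geq1$, and assume the statement for $M_k$ and for $M_{k+1}$ with fewer than $n$ elements.

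First I would decompose $A$ according to its largest entry. Put $t=\max\{y_1:y\in A\}$ and write $A=B\cup C$ (a disjoint union) with $B=\{y\in A:y_1=t\}$ and $C=\{y\in A:y_1<t\}$; set $b=|B|\geq1$ and $c=|C|=n-b$. Since every element of $B$ has first coordinate $t$, deleting that coordinate gives a bijection $B\to B'\subseteq M_k$ with $|B'|=b$. Two containments in $\partial A$ will drive the argument. On one hand $B'\subseteq\partial A$, because each member of $B'$ arises by deleting the entry $t$ from an element of $A$. On the other hand, deleting a non-first coordinate from an element of $B$ produces a sequence with first coordinate $t$ whose tail lies in $\partial B'$; writing $\{t\}\ast\partial B'$ for the family obtained by prepending $t$ to each member of $\partial B'$, this shows $\{t\}\ast\partial B'\subseteq\partial A$, and prepending is injective so its size is $|\partial B'|$.

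The crucial point is that $\{t\}\ast\partial B'$ is disjoint from $\partial C$: every sequence in the former has first coordinate $t$, whereas every element of $C$, and hence every element of $\partial C$, has first coordinate at most $t-1$. Combining this with the induction hypotheses applied to $B'\subseteq M_k$ and to $C\subseteq M_{k+1}$ (valid since $c<n$) gives
\[
|\partial A|\geq|\{t\}\ast\partial B'|+|\partial C|=|\partial B'|+|\partial C|\geq\partial^{k}(b)+\partial^{k+1}(c).
\]
Now I split on the hypothesis of Theorem \ref{technical}. If $b\geq\partial^{k+1}(n)$, the crude bound $|\partial A|\geq|B'|=b\geq\partial^{k+1}(n)$ already finishes the proof. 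If instead $b<\partial^{k+1}(n)$, then Theorem \ref{technical} applies with $a=b$ and yields $\partial^{k}(b)+\partial^{k+1}(n-b)\geq\partial^{k+1}(n)$; since $n-b=c$, the displayed inequality gives $|\partial A|\geq\partial^{k+1}(n)$.

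The main obstacle I expect is organizing the shadow bookkeeping so that the two regimes of the induction fit the hypothesis of Theorem \ref{technical}. Concretely, one must verify carefully that deleting the maximal entry and deleting the non-maximal entries split the shadow of $B$ into the copy $B'$ and the prepended family $\{t\}\ast\partial B'$, and, most importantly, that the prepended family is genuinely disjoint from $\partial C$, since this disjointness is what upgrades a union bound into the additive estimate $\partial^{k}(b)+\partial^{k+1}(c)$. The second delicate step is the case distinction: Theorem \ref{technical} is only available when $b<\partial^{k+1}(n)$, and it is exactly the complementary range that the trivial estimate $|\partial A|\geq|B'|=b$ is tailored to handle. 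Once this combinatorial scaffolding is in place, the numerical inequality supplies the rest, which is what makes the proof short.
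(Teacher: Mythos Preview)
Your argument is correct and follows the same overall strategy as the paper: split $A$ into a piece that drops to $M_k$ and a piece that stays in $M_{k+1}$, apply the inductive hypotheses to each, and then feed the resulting estimate $\partial^{k}(\cdot)+\partial^{k+1}(\cdot)$ into Theorem~\ref{technical}, with the trivial bound $|\partial A|\ge |B'|$ handling the complementary range. The bookkeeping you flag (that $\{t\}\ast\partial B'\subseteq\partial A$ and is disjoint from $\partial C$) checks out.

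The one genuine difference is the choice of decomposition. The paper splits by the \emph{smallest} coordinate: $A_2=\{x\in A:x_{k+1}\ge 2\}$ stays in $M_{k+1}$, while the elements with $x_{k+1}=1$ have their trailing $1$ stripped to form a subset of $M_k$. You instead split by the \emph{largest} coordinate $t=\max y_1$, stripping the leading $t$ from $B$. Your version is slightly cleaner in that there is always a maximal first coordinate, so you avoid the paper's extra normalization step (when no element ends in $1$, the paper must first subtract $1$ from every entry before proceeding). On the other hand, the paper's decomposition makes the shadow identity exact, $|\partial A|=|\partial A_2\cup A_{12}|+|\partial(A_{11}\cup A_{12})|$, whereas you only need and prove a lower bound; for the present purpose either suffices.
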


We present, in Section \ref{consequences}, a short and simple proof of Theorem
M obtained as a corollary of Theorem \ref{technical}. We point out that
Eckhoff and Wegner \cite{EW}, ( see also Daykin \cite{Daykin}) obtained a
proof of Theorem K as a consequence of an inequality similar to
(\ref{technical inequality}). Namely, for $n\geq a\geq0$,%
\begin{equation}
\max\left(  \partial_{k}(a),n-a\right)  +\partial_{k+1}\left(  n-a\right)
\geq\partial_{k+1}\left(  n\right)  \text{.} \label{kruskal ineq}%
\end{equation}
The equivalent inequality for the functions $\partial^{k}$ and $\partial
^{k+1}$ is true, and it was in fact generalized by Bj\"{o}rner and Vre\'{c}ica
\cite{BB97} to a larger number of terms (see Corollary \ref{Bjorner}). The
proof of their result depends on Macaulay's Theorem. However, we are not aware
of, nor could we find, a proof of Theorem M obtained as a consequence of this
result. We show, in Section \ref{consequences}, how Bj\"{o}rner and
Vre\'{c}ica's inequalities follow easily from Theorem \ref{technical}.

Our proof of the theorem, presented in Section 3, is elementary as it only
relies on properties of binomial coefficients. Some of the ideas are similar
to those used in \cite{EW} for the proof of (\ref{kruskal ineq}).

The condition $a<\partial^{k+1}(n)$ in Theorem \ref{technical} cannot be
strenghtened. For instance, whenever $k\geq2,$ $n_{3}=4,$ $n_{2}=2,$
$n_{1}=1,$ and $a=$ $\mathbf{\partial}^{k+1}\left(  n\right)  ,$ we have that
\[
\mathbf{\partial}^{k}\left(  a\right)  +\mathbf{\partial}^{k+1}\left(
n-a\right)  =\mathbf{\partial}^{k+1}\left(  n\right)  -1<\mathbf{\partial
}^{k+1}\left(  n\right)  .
\]

Finally, it is an interesting open problem to determine the pairs $(n,a)$ with
$a<\partial^{k+1}(n)$ that achieve equality in (\ref{technical inequality}).
So far we were able to classify the pairs when $n$ is of the form $\binom
{N}{k+1}$. The solution to this problem would be the first step to classify
all patterns $P$ for which the maximum number of translates of $P$, among $n$
points in $\mathbb{R}^{d}$; is equal to $n-\partial^{k}(n)$.

\section{Consequences of the theorem\label{consequences}}

We first prove Macaulay's Theorem as a corollary of Theorem \ref{technical}.

\begin{proof}
[Proof of Theorem M]Let $A\subseteq M_{k+1}$. We proceed by induction on
$k+\left\vert A\right\vert $. If $k=0$ or $A=\emptyset$, the result is
trivially true. Suppose $k\geq1$ and $A\neq\emptyset$. Set $A_{11}=\{x\in
M_{k}:x_{k}=1$ and $x\ast1\in A\}$, $A_{12}=\{x\in M_{k}:x_{k}\geq2$ and
$x\ast1\in A\}$, and $A_{2}=\{x\in A:x_{k+1}\geq2\}$. Here $x\ast1$ denotes
the concatenation of $x$ and $1$, that is $x\ast1$ is the $k$-tuple $x$ with
an entry 1 appended in the $(k+1)^{\text{th}}$ position. Clearly,
$A=(A_{11}\ast1)\cup(A_{12}\ast1)\cup A_{2}$ and the terms in the union are
pairwise disjoint. Moreover, we can assume that $A_{11}\cup A_{12}%
\neq\emptyset$. Otherwise, since all entries of members of $A$ are $\geq2$, we
can work with the set $A^{\prime}$ obtained by subtracting 1 to every entry in
the sequences of $A$ ($\left\vert A^{\prime}\right\vert =\left\vert
A\right\vert $ and $\left\vert \partial A^{\prime}\right\vert =\left\vert
\partial A\right\vert $.) Let $a=\left\vert A_{11}\right\vert +\left\vert
A_{12}\right\vert $ and $b=\left\vert A_{2}\right\vert $. Note that
$\left\vert A\right\vert =a+b$ and $a\geq1$.

If $x=(x_{1},x_{2},\ldots,x_{k})\in A_{11}$, then $(x_{1},x_{2},\ldots
,x_{k-1})\in\partial A_{11}$ and $(x_{1},x_{2},\ldots,x_{k-1},1)=x\in\partial
A_{11}\ast1$. That is, $A_{11}\subseteq\partial A_{11}\ast1$. We now calculate
$\partial A$ in terms of $A_{11},A_{12}$, and $A_{2}$. We use the property
that $\partial(A\cup B)=\partial A\cup\partial B$.
\begin{align*}
\partial A  &  =\partial A_{2}\cup A_{12}\cup A_{11}\cup\left(  \partial
A_{11}\ast1\right)  \cup\left(  \partial A_{12}\ast1\right) \\
&  =\partial A_{2}\cup A_{12}\cup\left(  \partial A_{11}\ast1\right)
\cup\left(  \partial A_{12}\ast1\right) \\
&  =(\partial A_{2}\cup A_{12})\cup\left(  \partial(A_{11}\cup A_{12}%
)\ast1\right)  .
\end{align*}
If $x\in(\partial A_{2}\cup A_{12})$, then $x_{k}\geq2$. Thus
\[
(\partial A_{2}\cup A_{12})\cap\left(  \partial(A_{11}\cup A_{12}%
)\ast1\right)  =\emptyset,
\]
and consequently%
\begin{equation}
\left\vert \partial A\right\vert =\left\vert \partial A_{2}\cup A_{12}%
\right\vert +\left\vert \partial(A_{11}\cup A_{12})\right\vert \text{.}
\label{macaulay proof identity}%
\end{equation}
We consider two cases. If $a\geq$ $\mathbf{\partial}^{k+1}(\left\vert
A\right\vert )$, then
\[
\left\vert \partial A\right\vert =\left\vert \partial A_{2}\cup A_{12}%
\right\vert +\left\vert \partial(A_{11}\cup A_{12})\right\vert \geq\left\vert
A_{12}\right\vert +\left\vert A_{11}\right\vert =a\geq\mathbf{\partial}%
^{k+1}(\left\vert A\right\vert ).
\]
Assume $a<$ $\mathbf{\partial}^{k+1}(\left\vert A\right\vert )$. Since
$a\geq1$ then $b<\left\vert A\right\vert $ and thus, by induction and
(\ref{shadow vs macaulay}),
\[
\left\vert \partial A_{2}\cup A_{12}\right\vert \geq\left\vert \partial
A_{2}\right\vert \geq\left\vert \partial F_{k+1}\left(  b\right)  \right\vert
=\mathbf{\partial}^{k+1}(b)\text{ and}%
\]%
\[
\left\vert \partial(A_{11}\cup A_{12})\right\vert \geq\left\vert \partial
F_{k}\left(  a\right)  \right\vert =\mathbf{\partial}^{k}(a).
\]
Therefore, by (\ref{macaulay proof identity}), Theorem \ref{technical}, and
(\ref{shadow vs macaulay}); we have%
\[
\left\vert \partial A\right\vert \geq\mathbf{\partial}^{k+1}%
(b)+\mathbf{\partial}^{k}(a)\geq\mathbf{\partial}^{k+1}(\left\vert
A\right\vert )=\left\vert \partial F_{k+1}\left(  \left\vert A\right\vert
\right)  \right\vert .
\]

\end{proof}

In terms of shadows of sets, and using our previous corollary, Theorem
\ref{technical} can be generalized as follows.

\begin{corollary}
Given sets $A\subseteq M_{k}$ and $B\subseteq M_{k+1}$ with $\left\vert
A\right\vert <\left\vert \partial F_{k+1}\left(  \left\vert A\right\vert
+\left\vert B\right\vert \right)  \right\vert $ we have%
\[
\left\vert \partial A\right\vert +\left\vert \partial B\right\vert
\geq\left\vert \partial F_{k+1}\left(  \left\vert A\right\vert +\left\vert
B\right\vert \right)  \right\vert .
\]

\end{corollary}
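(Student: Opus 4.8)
The plan is to derive the corollary as a short formal consequence of Theorem~\ref{technical} together with Theorem~M (which was just established as a corollary of Theorem~\ref{technical}). Write $a=\left\vert A\right\vert $, $b=\left\vert B\right\vert $, and $n=a+b$. Since $b\geq0$ we have $n\geq a\geq0$, and by (\ref{shadow vs macaulay}) the hypothesis $\left\vert A\right\vert <\left\vert \partial F_{k+1}(\left\vert A\right\vert +\left\vert B\right\vert )\right\vert $ is precisely the condition $a<\mathbf{\partial}^{k+1}(n)$ needed to apply Theorem~\ref{technical}; doing so gives
\[
\mathbf{\partial}^{k}(a)+\mathbf{\partial}^{k+1}(n-a)\geq\mathbf{\partial}^{k+1}(n).
\]

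Next I would bound each shadow from below by Theorem~M. Applied to $A\subseteq M_{k}$ it yields $\left\vert \partial A\right\vert \geq\left\vert \partial F_{k}(a)\right\vert $, and applied to $B\subseteq M_{k+1}$ it yields $\left\vert \partial B\right\vert \geq\left\vert \partial F_{k+1}(b)\right\vert $. Rewriting the right-hand sides with (\ref{shadow vs macaulay}) as $\mathbf{\partial}^{k}(a)$ and $\mathbf{\partial}^{k+1}(b)=\mathbf{\partial}^{k+1}(n-a)$, adding the two inequalities, and chaining with the display above, we obtain
\[
\left\vert \partial A\right\vert +\left\vert \partial B\right\vert \geq\mathbf{\partial}^{k}(a)+\mathbf{\partial}^{k+1}(n-a)\geq\mathbf{\partial}^{k+1}(n)=\left\vert \partial F_{k+1}(\left\vert A\right\vert +\left\vert B\right\vert )\right\vert ,
\]
which is the assertion.

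I do not expect any genuine obstacle here: the entire content is carried by Theorem~\ref{technical}, and the remainder is only bookkeeping between shadow cardinalities and the functions $\mathbf{\partial}^{k}$ through (\ref{shadow vs macaulay}), plus the two invocations of Theorem~M. The one mildly delicate point is the degenerate value $k=0$, but there the hypothesis forces $A=\emptyset$ and the statement reduces to Macaulay's Theorem applied to $B$, so it holds with nothing further to check.
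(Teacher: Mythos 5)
Your proof is correct and follows the paper's own argument essentially verbatim: apply Macaulay's Theorem (just proved from Theorem~\ref{technical}) to bound $\left\vert \partial A\right\vert$ and $\left\vert \partial B\right\vert$ below by $\mathbf{\partial}^{k}(\left\vert A\right\vert)$ and $\mathbf{\partial}^{k+1}(\left\vert B\right\vert)$, translate the hypothesis via (\ref{shadow vs macaulay}), and conclude with Theorem~\ref{technical}. Your extra remark about the degenerate case $k=0$ is a harmless addition the paper does not bother with.
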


\begin{proof}
By the previous corollary and (\ref{shadow vs macaulay}), $\left\vert \partial
A\right\vert +\left\vert \partial B\right\vert \geq$ $\mathbf{\partial}%
^{k}\left(  \left\vert A\right\vert \right)  +$ $\mathbf{\partial}%
^{k+1}\left(  \left\vert B\right\vert \right)  $ and $\left\vert A\right\vert
<$ $\mathbf{\partial}^{k+1}\left(  \left\vert A\right\vert +\left\vert
B\right\vert \right)  $. Thus, by Theorem \ref{technical}, $\mathbf{\partial
}^{k}\left(  \left\vert A\right\vert \right)  +$ $\mathbf{\partial}%
^{k+1}\left(  \left\vert B\right\vert \right)  \geq\left\vert \partial
F_{k+1}\left(  \left\vert A\right\vert +\left\vert B\right\vert \right)
\right\vert $.
\end{proof}

The following inequality, proved by Bj\"{o}rner and Vre\'{c}ica, follows
directly from our Theorem. We recall that their proof makes use of Macaulay's
Theorem. Note that $r=1$, $n_{0}=a$, and $n_{1}=n-a$ give the equivalent
inequality to (\ref{kruskal ineq}) for the function $\partial^{k}$.

\begin{corollary}
\emph{(Lemma 3.2 \cite{BS}, also Lemma 2.1 \cite{N}).} For $k>0$, the function
$\partial^{k}$ satisfies that
\begin{align*}
\partial^{k}\left(  \sum_{i=0}^{r}n_{i}\right)   &  \leq\sum_{i=0}^{r}%
\max\left\{  n_{i+1},\partial^{k-i}\left(  n_{i}\right)  \right\}  ,\\
\partial^{k}\left(  1+\sum_{i=0}^{k}n_{i}\right)   &  \leq1+\sum_{i=0}%
^{k-1}\max\left\{  n_{i+1},\partial^{k-i}\left(  n_{i}\right)  \right\}
\text{.}%
\end{align*}
for all nonnegative integers $n_{i}$ and $r<k$. \label{Bjorner}
\end{corollary}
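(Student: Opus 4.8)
The plan is to derive both inequalities directly from Theorem~\ref{technical} by an induction that strips off one summand at a time, using only Theorem~\ref{technical} (and not Macaulay's theorem, unlike Bj\"{o}rner and Vre\'{c}ica's original argument). The ingredient I would isolate first is the following consequence of Theorem~\ref{technical}: applying it with $k\mapsto m-1$ (where $m\geq 2$), $n\mapsto N$ and $a\mapsto p$, and noting that when $p\geq\partial^{m}(N)$ one trivially has $\partial^{m}(N)\leq p$, one obtains
\[
\partial^{m}(N)\leq\max\bigl\{\,p,\ \partial^{m-1}(p)+\partial^{m}(N-p)\,\bigr\}\qquad\text{for all }m\geq 2,\ 0\leq p\leq N.
\]
Throughout I use the conventions $\partial^{m}(0)=0$ and $n_{j}=0$ for out-of-range indices $j$, and I will only need the trivial facts that $\partial^{m}(j)\geq 0$ for all $j$ and that $\partial^{1}(j)=1$ whenever $j\geq 1$.

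For the first inequality I would induct on $r$, the statement being quantified over all $k>r$. The case $r=0$ is immediate, since the right-hand side is then exactly $\partial^{k}(n_{0})$. For $r\geq 1$ (so $k\geq 2$), write $N=\sum_{i=0}^{r}n_{i}$ and peel off the \emph{tail} $B=\sum_{i=1}^{r}n_{i}$ by applying the displayed inequality with $m=k$ and $p=B$, so that $N-p=n_{0}$. If the maximum is attained at $B$, then $\partial^{k}(N)\leq B\leq\sum_{i=0}^{r}\max\{n_{i+1},\partial^{k-i}(n_{i})\}$ and we are done. Otherwise $\partial^{k}(N)\leq\partial^{k-1}(B)+\partial^{k}(n_{0})$; here $\partial^{k-1}(B)=\partial^{k-1}\bigl(\sum_{i=1}^{r}n_{i}\bigr)$ is an instance of the inequality with exponent $k-1$ and $r-1$ in place of $r$ (and $r-1<k-1$), so the induction hypothesis, after the index shift $i\mapsto i+1$, bounds it by $\sum_{i=1}^{r}\max\{n_{i+1},\partial^{k-i}(n_{i})\}$; combined with $\partial^{k}(n_{0})\leq\max\{n_{1},\partial^{k}(n_{0})\}$ this gives the claim.

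The second inequality follows by the same argument run as an induction on $k$. The base case $k=1$ is trivial: the left-hand side is $\partial^{1}(1+n_{0}+n_{1})=1$ because its argument is positive, while the right-hand side is at least $1$. For $k\geq 2$, set $N=1+\sum_{i=0}^{k}n_{i}$ and peel off $B=1+\sum_{i=1}^{k}n_{i}$ via the displayed inequality with $m=k$, $p=B$ (again $N-p=n_{0}$). If the maximum is attained at $B$, then $\partial^{k}(N)\leq B=1+\sum_{i=1}^{k}n_{i}\leq 1+\sum_{i=0}^{k-1}\max\{n_{i+1},\partial^{k-i}(n_{i})\}$. Otherwise $\partial^{k}(N)\leq\partial^{k-1}(B)+\partial^{k}(n_{0})$, and $\partial^{k-1}(B)=\partial^{k-1}\bigl(1+\sum_{i=1}^{k}n_{i}\bigr)$ is the ``$1+\cdots$'' inequality with $k$ replaced by $k-1$; the induction hypothesis (and the shift $i\mapsto i+1$) bounds it by $1+\sum_{i=1}^{k-1}\max\{n_{i+1},\partial^{k-i}(n_{i})\}$, and adding $\partial^{k}(n_{0})\leq\max\{n_{1},\partial^{k}(n_{0})\}$ completes the induction.

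The case checks are routine; the genuinely delicate point is the index bookkeeping. The essential decision is to peel off the \emph{tail} block $B$ rather than the head, so that in Theorem~\ref{technical} the tail receives the lowered exponent $k-1$ while $n_{0}$ retains exponent $k$; this is exactly what forces the exponents $k,k-1,\dots$ and the subscripts of the $n_{i}$ to decrease in step when the induction hypothesis is invoked on $\partial^{k-1}(B)$. The hypothesis $r<k$ is precisely what keeps the process from ever calling for an undefined $\partial^{0}$, and the separate ``$1+\cdots$'' inequality is the form that absorbs the missing $\partial^{0}$ step in the boundary case.
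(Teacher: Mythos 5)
Your proof is correct and is essentially the paper's own argument: you peel off the tail sum $a=\sum_{i\geq 1}n_{i}$, split into the cases $a\geq\partial^{k}(n)$ (trivial bound by $a$) and $a<\partial^{k}(n)$ (apply Theorem~\ref{technical}), and then invoke the induction hypothesis on $\partial^{k-1}(a)$ with one fewer term. Packaging the two cases into a single $\max$ inequality and phrasing the first induction on $r$ rather than on $k$ are only cosmetic differences, since $r$ and $k$ decrease together in both versions.
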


\begin{proof}
By induction on $k$. If $k=1$ the inequalities are trivially true.

Let $r<k+1$, $a=\sum_{i=1}^{r}n_{i}$, and $n=\sum_{i=0}^{r}n_{i}$. If
$a\geq\partial^{k+1}\left(  n\right)  ,$ then%
\begin{align*}
\partial^{k+1}\left(  \sum_{i=0}^{r}n_{i}\right)   &  =\partial^{k+1}\left(
n\right)  \leq a=\sum_{i=1}^{r}n_{i}\leq\sum_{i=0}^{r-1}\max\left\{
n_{i+1},\partial^{k+1-i}\left(  n_{i}\right)  \right\} \\
&  \leq\sum_{i=0}^{r}\max\left\{  n_{i+1},\partial^{k+1-i}\left(
n_{i}\right)  \right\}  \text{.}%
\end{align*}
If, on the other hand, $a<\partial^{k+1}\left(  n\right)  $ then by Theorem
\ref{technical},
\[
\partial^{k+1}\left(  \sum_{i=0}^{r}n_{i}\right)  =\partial^{k+1}\left(
n\right)  \leq\partial^{k+1}\left(  n-a\right)  +\partial^{k}\left(  a\right)
=\partial^{k+1}\left(  n_{0}\right)  +\partial^{k}\left(  \sum_{i=0}%
^{r-1}n_{i+1}\right)  ;
\]
then by induction,%
\begin{align*}
\partial^{k+1}\left(  \sum_{i=0}^{r}n_{i}\right)   &  \leq\partial
^{k+1}\left(  n_{0}\right)  +\sum_{i=0}^{r-1}\max\left\{  n_{i+2}%
,\partial^{k-i}\left(  n_{i+1}\right)  \right\} \\
&  \leq\sum_{i=0}^{r}\max\left\{  n_{i+1},\partial^{k+1-i}\left(
n_{i}\right)  \right\}  \text{.}%
\end{align*}
This proves the first inequality. The second inequality is proved exactly the
same way by letting $a=1+\sum_{i=1}^{k+1}n_{i}$ and $n=1+\sum_{i=0}^{k+1}%
n_{i}$.
\end{proof}

\section{Proof of the theorem}

We first present a simple observation. If $n>k\geq0$ then by Pascal's identity%
\begin{equation}
\binom{n}{k}=\binom{n-1}{k}+\binom{n-2}{k-1}+\cdots+\binom{n-k}{1}%
+\binom{n-k-1}{0}. \label{Pascal identity}%
\end{equation}

Let $a=\sum_{i=1}^{k}\binom{a_{i}}{i}$ be the $k$-binomial representation of
$a$. We say that $a$ is $k$\emph{-long} if $a_{1}\geq1$, and $\emph{k}%
$\emph{-short} if $a_{1}=0$.

\begin{lemma}
\label{short}Let $a\geq0$ be an integer. If $a$ is $k$-short, then
$\mathbf{\partial}^{k}(a+1)=$ $\mathbf{\partial}^{k}(a)+1$, otherwise
$\mathbf{\partial}^{k}(a+1)=$ $\mathbf{\partial}^{k}(a)$.
\end{lemma}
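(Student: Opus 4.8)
The plan is to work directly with the $k$-binomial representation $a=\sum_{i=1}^{k}\binom{a_{i}}{i}$ and track how it changes when we pass from $a$ to $a+1$. The key is to identify, in terms of the exponents $a_{1}<a_{2}<\cdots<a_{k}$, exactly which representation $a+1$ has, and then read off $\partial^{k}(a+1)$ directly.

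First I would handle the $k$-short case. If $a_{1}=0$, then $\binom{a_{1}}{1}=\binom{0}{1}=0$, so in fact $a=\sum_{i=2}^{k}\binom{a_{i}}{i}$ with $1\le a_{2}<\cdots<a_{k}$; consequently $a+1=\binom{1}{1}+\sum_{i=2}^{k}\binom{a_{i}}{i}$ is a valid $k$-binomial representation (the new bottom exponent is $1$, which is strictly less than $a_{2}\ge 2$). Hence $\partial^{k}(a+1)=\binom{0}{0}+\sum_{i=2}^{k}\binom{a_{i}-1}{i-1}=1+\partial^{k}(a)$, using that the $\binom{a_{1}-1}{0}=\binom{-1}{0}=0$ term contributed nothing to $\partial^{k}(a)$ while the new $\binom{1-1}{0}=\binom{0}{0}=1$ term contributes $1$. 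Here I would be slightly careful about the degenerate subcase where all of $a_2,\dots,a_k$ are also forced to be small, but since the $a_i$ are strictly increasing and $a_2\ge 2$ there is no clash.

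For the $k$-long case ($a_{1}\ge 1$), the idea is: let $j$ be the largest index such that $a_{1},a_{2},\dots,a_{j}$ are consecutive, i.e.\ $a_{i}=a_{1}+i-1$ for $i\le j$ and (if $j<k$) $a_{j+1}>a_{j}+1$. Using Pascal's identity (\ref{Pascal identity}) repeatedly, $\sum_{i=1}^{j}\binom{a_{1}+i-1}{i}+1=\binom{a_{1}+j}{j}+\sum_{i=1}^{j-1}0$ — more precisely one shows $\binom{a_1+j-1}{j}+\binom{a_1+j-2}{j-1}+\cdots+\binom{a_1}{1}+1=\binom{a_1+j}{j}$, which is exactly (\ref{Pascal identity}) with $n=a_1+j$ and $k$ there equal to $j$ (the ``$+1$'' being the $\binom{a_1-1}{0}$ term). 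Therefore $a+1=\binom{a_{1}+j}{j}+\sum_{i=j+1}^{k}\binom{a_{i}}{i}$, and since $a_{1}+j\le a_{j+1}-1<a_{j+1}$ (when $j<k$) this is the $k$-binomial representation of $a+1$, now $k$-short at the bottom (its lowest exponent with a nonzero binomial is $a_1+j\ge 2$ sitting in position $j$, and positions $1,\dots,j-1$ are absent). Comparing Kruskal--Macaulay functions, the contribution of the bottom block changes from $\binom{a_1+j-2}{j-1}+\cdots+\binom{a_1-1}{0}$ to the single term $\binom{a_1+j-1}{j-1}$, and Pascal's identity (\ref{Pascal identity}) again — with $n=a_1+j-1$, lower index $j-1$ — shows these are equal; the higher terms $\binom{a_i-1}{i-1}$ for $i>j$ are untouched. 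Hence $\partial^{k}(a+1)=\partial^{k}(a)$.

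The main obstacle is bookkeeping the edge cases cleanly: when $j=k$ (all exponents consecutive, so $a+1=\binom{a_1+k}{k}$ and one must check this is still $k$-short), when $a=0$, and when the "consecutive block" has length $j=1$. In each of these the argument above still goes through verbatim once one agrees on the conventions $\binom{m}{0}=1$ for $m\ge 0$ and $\binom{-1}{0}=0$, but it is worth stating the decomposition via $j$ explicitly so that the two applications of (\ref{Pascal identity}) — one to rewrite $a+1$, one to rewrite $\partial^{k}(a+1)$ — are seen to be literally the same identity shifted by one in the upper index. No deeper idea is needed; the lemma is purely a computation with the representation.
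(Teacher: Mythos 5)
Your $k$-long case is correct and is essentially the paper's own argument (your $j$ is the paper's $v-1$, and the two uses of (\ref{Pascal identity}) are exactly the paper's). The gap is in the $k$-short case: from $a_{1}=0$ you conclude $a_{2}\geq 2$, but the representation only forces $a_{2}\geq 1$, and $a_{2}=1$ genuinely occurs. For example, with $k=3$ and $a=4$ the $3$-binomial representation is $\binom{4}{3}+\binom{1}{2}+\binom{0}{1}$, so $a$ is $3$-short with $a_{2}=1$. Your claimed representation of $a+1$ would be $\binom{4}{3}+\binom{1}{2}+\binom{1}{1}$, which violates $n_{1}<n_{2}$ and hence is not a $k$-binomial representation at all, so you cannot read $\partial^{k}(a+1)$ off from it; the true representation of $5$ is $\binom{4}{3}+\binom{2}{2}+\binom{0}{1}$. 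In other words, the ``degenerate subcase'' you flag and then dismiss is precisely the case that needs an argument: when the zero terms occupy several of the bottom positions, the added $1$ does not land in position $1$.

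The repair is what the paper does. Because the exponents are strictly increasing, the zero terms always occupy an initial segment of positions, so for $a\geq 1$ short one can write $a=\sum_{i=v}^{k}\binom{a_{i}}{i}$ with $v\geq 2$ and $a_{v}\geq v$. Then $a+1=\sum_{i=v}^{k}\binom{a_{i}}{i}+\binom{v-1}{v-1}$ is the $k$-binomial representation of $a+1$ (valid since $v-1<v\leq a_{v}$), and the new term contributes $\binom{v-2}{v-2}=1$ to $\partial^{k}$, giving $\partial^{k}(a+1)=\partial^{k}(a)+1$; your argument is the special case $v=2$. (A minor, inessential slip elsewhere: in the long case with $j=1$ the resulting representation of $a+1$ has lowest exponent $a_{1}+1\geq 2$ in position $1$, so $a+1$ is $k$-long, not $k$-short; this aside plays no role in your computation.)
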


\begin{proof}
The result is clear for $a=0$. If $a\geq1$ is $k$-short, then $a=\sum
_{i=v}^{k}\binom{a_{i}}{i}$ for some $v\geq2$ and $a_{v}\geq v$. Thus
$a+1=\sum_{i=v}^{k}\binom{a_{i}}{i}+\binom{v-1}{v-1}$ is the $k$-binomial
representation of $a+1$ where all the zero terms have been omitted. Then
$\mathbf{\partial}^{k}(a+1)=$ $\mathbf{\partial}^{k}(a)+\binom{v-2}{v-2}=$
$\mathbf{\partial}^{k}(a)+1$.

Now suppose $a$ is $k$-long. There is $v\geq2$ such that $a_{j}=a_{1}+j-1$ for
$j<v$, and either $v=k+1$ or $v\leq k$ and $a_{v}>a_{1}+v-1$. Then%
\[
a+1=\binom{a_{k}}{k}+\cdots+\binom{a_{v}}{v}+\binom{a_{1}+v-2}{v-1}%
+\cdots+\binom{a_{1}+1}{2}+\binom{a_{1}}{1}+\binom{a_{1}-1}{0}%
\]
and by (\ref{Pascal identity}) the $k$-binomial representation of $a+1$ is%
\[
a+1=\binom{a_{k}}{k}+\cdots+\binom{a_{v}}{v}+\binom{a_{1}+v-1}{v-1}\text{.}%
\]
Then, again by (\ref{Pascal identity}),%
\[
\mathbf{\partial}^{k}(a+1)-\mathbf{\partial}^{k}(a)=\binom{a_{1}+v-2}%
{v-2}-\left(  \binom{a_{1}+v-3}{v-2}+\cdots+\binom{a_{1}}{1}+\binom{a_{1}%
-1}{0}\right)  =0\text{.}%
\]

\end{proof}

To prove the Theorem, we need to consider the \emph{extended }$k$%
\emph{-binomial representation} of a positive integer $a$, by requiring an
$a_{0}$ coefficient. That is, we write
\[
a=\binom{a_{a}^{\prime}}{k}+\binom{a_{a-1}^{\prime}}{k-1}+...+\binom
{a_{2}^{\prime}}{2}+\binom{a_{1}^{\prime}}{1}+\binom{a_{0}^{\prime}}%
{0}\text{,}%
\]
with $0\leq a_{0}^{\prime}=a_{1}^{\prime}-1<a_{1}^{\prime}<\cdots
<a_{k}^{\prime}$. The condition $a_{0}^{\prime}=a_{1}^{\prime}-1$ is necessary
to make this representation unique when it exists. Clearly $a=0$ does not have
an extended representation. In general the following is true.

\begin{lemma}
\label{extended}Let $a=\sum_{i=v}^{k}\binom{a_{i}}{i}\geq1$ be the
$k$-binomial representation of $a$, where the terms equal to zero have been
omitted. The extended $k$-binomial representation of $a$ exists (and it is
unique), if and only if $a_{v}\geq v+1$.
\end{lemma}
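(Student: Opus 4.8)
The plan is to peel off the extra $\binom{a_{0}^{\prime}}{0}$ term and reduce the whole statement to a fact about the ordinary $k$-binomial representation of $a-1$, which one can then read off from the $k$-binomial representation of $a$ using Pascal's identity (\ref{Pascal identity}).

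First I would establish the reduction: the extended $k$-binomial representation of $a$ exists, and is then automatically unique, if and only if $a-1$ is $k$-long. The point is that $a_{0}^{\prime}=a_{1}^{\prime}-1\geq0$ forces $\binom{a_{0}^{\prime}}{0}=1$, so an extended representation $a=\sum_{i=0}^{k}\binom{a_{i}^{\prime}}{i}$ with $0\leq a_{0}^{\prime}=a_{1}^{\prime}-1<a_{1}^{\prime}<\cdots<a_{k}^{\prime}$ is the same data as a representation $a-1=\sum_{i=1}^{k}\binom{a_{i}^{\prime}}{i}$ with $1\leq a_{1}^{\prime}<\cdots<a_{k}^{\prime}$, the coefficient $a_{0}^{\prime}=a_{1}^{\prime}-1$ being determined. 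Since $a_{1}^{\prime}\geq1\geq0$, such a representation of $a-1$ is a bona fide $k$-binomial representation, so by uniqueness of the latter it must coincide with the $k$-binomial representation of $a-1$; and that representation has its bottom ($i=1$) coefficient $\geq1$ exactly when $a-1$ is $k$-long. This gives both the existence criterion (in terms of ``$a-1$ is $k$-long'') and uniqueness.

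Then I would prove that $a-1$ is $k$-long if and only if $a_{v}\geq v+1$, simply by exhibiting the $k$-binomial representation of $a-1$ in the two possible cases (note $a_{v}\geq v$ always). If $a_{v}\geq v+1$, I would apply (\ref{Pascal identity}) to $\binom{a_{v}}{v}$ and drop its final term $\binom{a_{v}-v-1}{0}=1$, which is legitimate since $a_{v}-v-1\geq0$; this yields
\[
a-1=\binom{a_{k}}{k}+\cdots+\binom{a_{v+1}}{v+1}+\binom{a_{v}-1}{v}+\binom{a_{v}-2}{v-1}+\cdots+\binom{a_{v}-v}{1},
\]
and the chain of inequalities $1\leq a_{v}-v<a_{v}-v+1<\cdots<a_{v}-1<a_{v+1}<\cdots<a_{k}$ certifies this as the $k$-binomial representation of $a-1$; its bottom coefficient is $a_{v}-v\geq1$, so $a-1$ is $k$-long. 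If instead $a_{v}=v$, then $\binom{a_{v}}{v}=1$ and $a-1=\binom{a_{k}}{k}+\cdots+\binom{a_{v+1}}{v+1}$ (understood as $0$ when $v=k$); filling the missing indices $1,\dots,v$ with the zero terms $\binom{i-1}{i}$, which is consistent because $v-1<a_{v+1}$, shows the bottom coefficient is $0$, so $a-1$ is $k$-short. Combining the two cases with the reduction finishes the proof.

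None of this is deep: the displayed expansions and the strict inequalities verifying them are routine once Pascal's identity is in hand. The only spot that calls for a little care — and where I expect the mild obstacle to be — is the reduction step, where one must check that the normalization $a_{0}^{\prime}=a_{1}^{\prime}-1$ is exactly what simultaneously forces uniqueness of the extended representation and makes its existence equivalent to the single, clean condition that $a-1$ is $k$-long.
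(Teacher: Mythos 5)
Your proof is correct, and it takes a somewhat different route from the paper's, though both ultimately rest on the same application of Pascal's identity (\ref{Pascal identity}) to $\binom{a_v}{v}$. You first translate the problem: stripping the forced term $\binom{a_0'}{0}=1$ identifies extended $k$-representations of $a$ with $k$-binomial representations of $a-1$ whose bottom coefficient is at least $1$, so existence is equivalent to ``$a-1$ is $k$-long'' and uniqueness falls out of the uniqueness of ordinary $k$-binomial representations; you then settle the $k$-long question by a two-case computation of the representation of $a-1$ (expand $\binom{a_v}{v}$ and drop the final $1$ when $a_v\geq v+1$; delete the term $\binom{v}{v}=1$ when $a_v=v$). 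The paper instead converts between the two representations of $a$ itself: forward, it expands $\binom{a_v}{v}$ all the way down to the index-$0$ term to produce the extended representation; backward, it takes an arbitrary extended representation, collapses the maximal run $a_j'=a_1'+j-1$, $0\leq j\leq v$, into the single term $\binom{a_1'+v}{v}$, and reads off $a_v=a_1'+v\geq v+1$ from the (unique) ordinary representation so obtained. Your detour through $a-1$ buys a cleaner and more explicit uniqueness argument and connects naturally with the $k$-long/$k$-short dichotomy of Lemma \ref{short}, at the cost of an extra reduction step; the paper's argument is more direct but leaves uniqueness slightly more implicit. The verifications in your two cases (the strict chains of coefficients, the handling of $v=k$, and the condition $a_v-v-1\geq 0$ needed to drop the last Pascal term) are all in order.
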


\begin{proof}
If $a_{v}\geq v+1$, then, by (\ref{Pascal identity}),
\[
\binom{a_{v}}{v}=\binom{a_{v}-1}{v}+\binom{a_{v}-2}{v-1}+\cdots+\binom
{a_{v}-v-1}{0}.
\]
Thus
\[
a=\sum_{i=v+1}^{k}\binom{a_{i}}{i}+\sum_{i=0}^{v}\binom{a_{v}-v-1+i}{i}%
\]
is an extended $k$-representation of $a$. Reciprocally, if $a=\sum_{i=0}%
^{k}\binom{a_{i}^{\prime}}{i}$ is an extended $k$-representation, then
$\binom{a_{0}^{\prime}}{0}=\binom{a_{1}^{\prime}-1}{0},$ and there is $v\geq1$
such that $a_{j}^{\prime}=a_{1}^{\prime}+j-1$ for $0\leq j\leq v$ with either
$v=k$ or $a_{v+1}^{\prime}>a_{1}^{\prime}+v$. Then, by (\ref{Pascal identity}%
),
\[
a=\sum_{i=v+1}^{k}\binom{a_{j}^{\prime}}{j}+\sum_{j=0}^{v}\binom{a_{1}%
^{\prime}+j-1}{j}=\sum_{i=v+1}^{k}\binom{a_{j}^{\prime}}{j}+\binom
{a_{1}^{\prime}+v}{v}\text{,}%
\]
is the $k$-representation of $a$. Thus $a_{v}=a_{1}^{\prime}+v\geq v+1$.
\end{proof}

We can define $\mathbf{\partial}_{e}^{k}(a)=\sum_{i=1}^{k}\binom{a_{i}%
^{\prime}-1}{i-1}$ for the extended $k$-representation of $a$ (if it exists).
It turns out that both definitions agree, i.e., $\mathbf{\partial}^{k}(a)=$
$\mathbf{\partial}_{e}^{k}(a)$. Indeed, if $a=\sum_{i=v}^{k}\binom{a_{i}}{i}$
with $a_{v}\geq v+1$, then by (\ref{Pascal identity}) and the last proof,
\[
\mathbf{\partial}^{k}(a)-\mathbf{\partial}_{e}^{k}(a)=\tbinom{a_{v}-1}%
{v-1}-\sum_{i=0}^{v}\binom{a_{v}-v-2+i}{i-1}=0.
\]

Let $n=\sum_{i=1}^{k+1}\binom{n_{i}}{i},$ $a=\sum_{i=1}^{k}\binom{a_{i}}{i},$
and $n-a=b=\sum_{i=1}^{k+1}\binom{b_{i}}{i}$, be binomial representations.

\begin{lemma}
\label{lemma comparisons2}If $0\leq a<$ $\mathbf{\partial}^{k+1}\left(
n\right)  $, then $a_{k}<n_{k+1}\leq b_{k+1}+1.$
\end{lemma}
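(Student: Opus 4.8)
The plan is to establish the two inequalities $a_k < n_{k+1}$ and $n_{k+1} \le b_{k+1}+1$ separately, using the monotonicity of $\partial^{k+1}$ that follows from Lemma \ref{short}.

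\medskip

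\textbf{First inequality: $a_k < n_{k+1}$.} I would argue by contradiction, supposing $a_k \ge n_{k+1}$. The idea is that the $k$-binomial representation of $a$ then ``dominates'' the top part of the $(k+1)$-binomial representation of $n$ in a way that forces $a \ge \partial^{k+1}(n)$, contradicting the hypothesis $a < \partial^{k+1}(n)$. Concretely, from $a_k \ge n_{k+1}$ one gets $\binom{a_k}{k} \ge \binom{n_{k+1}-1}{k}$, and more generally one wants to compare $a = \sum_{i=1}^{k}\binom{a_i}{i}$ termwise (or via a telescoping/Pascal argument) against $\partial^{k+1}(n) = \sum_{i=1}^{k+1}\binom{n_i-1}{i-1}$, which is a sum of $k+1$ binomial coefficients with ``top index'' one less and ``bottom index'' one less than those of $n$. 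The cleanest route is probably to note that $\partial^{k+1}(n) \le \binom{n_{k+1}}{k}$ with a bit of room, since $\binom{n_{k+1}}{k} = \binom{n_{k+1}-1}{k}+\binom{n_{k+1}-1}{k-1} \ge \binom{n_{k+1}-1}{k} + \partial^{k+1}\!\big(n-\binom{n_{k+1}}{k+1}\big)$ by induction on $k$ together with $n_k < n_{k+1}$; hence if $a_k \ge n_{k+1}$ then $a \ge \binom{a_k}{k} \ge \binom{n_{k+1}}{k} > \partial^{k+1}(n)$, a contradiction. I expect some care is needed with the edge case $n_{k+1} = k+1$ (so $n = \binom{k+1}{k+1}=1$ plus lower terms) where these binomials degenerate.

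\medskip

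\textbf{Second inequality: $n_{k+1} \le b_{k+1}+1$.} Here the key observation is that $\partial^{k+1}$ is nondecreasing (immediate from Lemma \ref{short}, which shows $\partial^{k+1}(m+1) \in \{\partial^{k+1}(m), \partial^{k+1}(m)+1\}$), so from $b = n - a \le n$ we get $\partial^{k+1}(b) \le \partial^{k+1}(n)$. On the other hand, $b < n$ would be needed to get strictness; but since $a$ could a priori be $0$, I instead use the hypothesis directly: $b = n-a > n - \partial^{k+1}(n)$. Now I claim $n - \partial^{k+1}(n) \ge \binom{n_{k+1}-1}{k+1}$, which holds because $n - \partial^{k+1}(n) = \sum_{i=1}^{k+1}\big[\binom{n_i}{i} - \binom{n_i-1}{i-1}\big] = \sum_{i=1}^{k+1}\binom{n_i-1}{i} \ge \binom{n_{k+1}-1}{k+1}$. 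Therefore $b \ge \binom{n_{k+1}-1}{k+1}$, and since the $k$-binomial representation is order-preserving (if $b \ge \binom{N}{k+1}$ then $b_{k+1} \ge N$), we conclude $b_{k+1} \ge n_{k+1}-1$, i.e. $n_{k+1} \le b_{k+1}+1$.

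\medskip

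\textbf{Main obstacle.} The routine part is the telescoping identity $n - \partial^{k+1}(n) = \sum_i \binom{n_i-1}{i}$ and the monotonicity of the binomial representation. The genuinely delicate step is the first inequality $a_k < n_{k+1}$: the hypothesis only gives $a < \partial^{k+1}(n)$, which is a strictly smaller bound than $\binom{n_{k+1}}{k}$, so I must extract the sharper estimate $\partial^{k+1}(n) \le \binom{n_{k+1}}{k}$ (or rather $< \binom{n_{k+1}}{k}$, using $n_k < n_{k+1}$) carefully, handling the possibility that $n$ has few terms or that $n_{k+1}$ is as small as $k+1$. I anticipate that an induction on $k$, peeling off the leading term $\binom{n_{k+1}}{k+1}$ of $n$ and the leading term $\binom{a_k}{k}$ of $a$, will be the right bookkeeping device, and that the extended $k$-binomial representation machinery from Lemma \ref{extended} may be what streamlines the degenerate cases.
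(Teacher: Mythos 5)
Your proposal is correct in substance, and for the second inequality it is essentially the paper's argument run directly instead of contrapositively: both rest on the comparison $n-\binom{n_{k+1}-1}{k+1}\ge\partial^{k+1}(n)$ (equivalently $n-\partial^{k+1}(n)\ge\binom{n_{k+1}-1}{k+1}$, via Pascal on the leading term and $\binom{n_i}{i}\ge\binom{n_i-1}{i-1}$ on the rest), followed by the standard monotonicity of the leading index of the binomial representation. For the first inequality both arguments reduce to the bound $\partial^{k+1}(n)\le\binom{n_{k+1}}{k}$, but the paper gets it in one line with no induction and no degenerate cases: since $n\le\binom{n_{k+1}+1}{k+1}$ and $\partial^{k+1}$ is nondecreasing by Lemma \ref{short}, one has $\partial^{k+1}(n)\le\partial^{k+1}\bigl(\tbinom{n_{k+1}+1}{k+1}\bigr)=\binom{n_{k+1}}{k}$; your peeling-off induction on $k$ proves the same bound but costs more bookkeeping and is precisely where your worries about the edge case $n_{k+1}=k+1$ come from. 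Two small slips in your sketch, neither fatal: the displayed induction step should involve $\partial^{k}\bigl(n-\tbinom{n_{k+1}}{k+1}\bigr)$, not $\partial^{k+1}$, since the lower terms of $\partial^{k+1}(n)$ constitute $\partial^{k}$ of the remainder (with $\partial^{k+1}$ the claimed bound $\binom{n_{k+1}-1}{k-1}\ge\partial^{k+1}\bigl(n-\tbinom{n_{k+1}}{k+1}\bigr)$ can fail, e.g.\ $k=1$, $n=\binom{5}{2}+\binom{3}{1}$); and the strict inequality $\binom{n_{k+1}}{k}>\partial^{k+1}(n)$ is false in general even though $n_k<n_{k+1}$ (equality holds for $n=\binom{n_{k+1}+1}{k+1}-1$), but this does not matter because the hypothesis $a<\partial^{k+1}(n)$ is strict, so the non-strict bound already yields the contradiction.
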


\begin{proof}
We prove the contrapositives. If $a_{k}\geq n_{k+1}$, then
\[
a\geq\binom{a_{k}}{k}\geq\binom{n_{k+1}}{k}=\mathbf{\partial}^{k+1}\left(
\binom{n_{k+1}+1}{k+1}\right)  \geq\mathbf{\partial}^{k+1}\left(  n\right)  ,
\]
since $\binom{n_{k+1}+1}{k+1}\geq n$ and $\mathbf{\partial}^{k+1}$ is a
non-decreasing function by Lemma \ref{short}. Now, if $b_{k+1}+1\leq
n_{k+1}-1$, then $b<\binom{b_{k+1}+1}{k+1}\leq\binom{n_{k+1}-1}{k+1}$. Thus%
\[
a=n-b>n-\binom{n_{k+1}-1}{k+1}=\binom{n_{k+1}-1}{k}+\binom{n_{k}}{k}%
+\binom{n_{k-1}}{k-1}+...\binom{n_{1}}{1},
\]
but%
\[
\mathbf{\partial}^{k+1}\left(  n\right)  =\binom{n_{k+1}-1}{k}+\binom{n_{k}%
-1}{k-1}+\binom{n_{k-1}-1}{k-2}+...\binom{n_{1}-1}{0},
\]
and clearly $\binom{n_{i}}{i}\geq\binom{n_{i}-1}{i-1}.$ Thus $a\geq$
$\mathbf{\partial}^{k+1}\left(  n\right)  .$
\end{proof}

\begin{proof}
[Proof of Theorem 1]Recall that $b=n-a$. Clearly, (\ref{technical inequality})
holds if $a=0$, and the case $a=1$ is a consequence of Lemma \ref{short}. We
consider two cases.

\begin{case}
$a_{k}<b_{k+1}$.
\end{case}

Let $a=\sum_{i=v}^{k}\binom{a_{i}}{i}\geq2$ be the $k$-binomial representation
of $a$ without the zero terms. Assume that the pair $(a,b)$ minimizes
$\mathbf{\partial}^{k}(a)+$ $\mathbf{\partial}^{k+1}(b)$ with $a$ as small as possible.

\begin{enumerate}
\item[(i)] Suppose first that $a_{v}\geq v+1$. Then, by Lemma \ref{extended},
$a$ has an extended representation, say $a=\sum_{i=0}^{k}\binom{a_{i}^{\prime
}}{i}$. Let%
\[
\alpha=\sum_{i=1}^{k}\binom{\min(a_{i}^{\prime},b_{i})}{i}\text{ and }%
\beta=\binom{b_{k+1}}{k+1}+\sum_{i=1}^{k}\binom{\max(a_{i}^{\prime},b_{i})}%
{i}+\binom{a_{0}^{\prime}}{0}.
\]
Note that $a+b=\alpha+\beta$ and $\alpha<a$. Also
\[
0\leq\min(a_{1}^{\prime},b_{1})<\min(a_{2}^{\prime},b_{2})<\cdots<\min
(a_{k}^{\prime},b_{k})\text{ and}%
\]%
\[
0\leq a_{0}^{\prime}<\max(a_{1}^{\prime},b_{1})<\cdots<\max(a_{k}^{\prime
},b_{k})<b_{k+1}%
\]
(since $a_{k}^{\prime}\leq a_{k}<b_{k+1}$ by assumption). Therefore the
definitions we gave for $\alpha$ and $\beta$ are $k$-binomial representations
(extended for $\beta$). This means that
\[
\mathbf{\partial}^{k}\left(  \alpha\right)  +\mathbf{\partial}^{k+1}\left(
\beta\right)  =\mathbf{\partial}^{k}\left(  \alpha\right)  +\mathbf{\partial
}_{e}^{k+1}\left(  \beta\right)  =\mathbf{\partial}^{k}\left(  a\right)
+\mathbf{\partial}^{k+1}\left(  b\right)  ,
\]
a contradiction to the minimality of $a$.

\item[(ii)] Assume now that $a_{v}=v$. This means that $a-1=a-\binom{a_{v}}%
{v}=\sum_{i=v+1}^{k}\binom{a_{i}}{i}\geq1$ is the $k$-representation of $a-1$,
and thus $a-1$ is short. Then by Lemma \ref{short},
\[
\mathbf{\partial}^{k}(a-1)+\mathbf{\partial}^{k+1}(b+1)=\mathbf{\partial}%
^{k}(a)-1+\mathbf{\partial}^{k+1}(b+1)\leq\mathbf{\partial}^{k}%
(a)+\mathbf{\partial}^{k+1}(b),
\]
again a contradiction to the minimality of $a$.
\end{enumerate}

Case 1 is settled.

\begin{case}
$b_{k+1}\leq a_{k}$.
\end{case}

Since $a<$ $\mathbf{\partial}^{k+1}\left(  n\right)  $ then, by Lemma
\ref{lemma comparisons2}, $a_{k}<n_{k+1}\leq b_{k+1}+1$. That is,
$a_{k}=b_{k+1}=n_{k+1}-1$. We proceed by induction on $k$. If $k=1,$ then
$a_{1}=b_{2}=n_{2}-1$. Thus
\[
\binom{n_{2}}{2}+\binom{n_{1}}{1}=n=a+b=\binom{n_{2}-1}{1}+\binom{n_{2}-1}%
{2}+\binom{b_{1}}{1},
\]
i.e., $b_{1}=n_{1}$. Hence,
\[
\mathbf{\partial}^{1}\left(  a\right)  +\mathbf{\partial}^{2}\left(  b\right)
=\binom{n_{2}-2}{0}+\binom{n_{2}-2}{1}+\binom{n_{1}-1}{0}=\mathbf{\partial
}^{2}\left(  n\right)  .
\]

Assume $k\geq2$ and that the result holds for $k-1$. Let $n^{\prime}%
=n-\binom{n_{k+1}}{k+1},$ $b^{\prime}=b-\binom{n_{k+1}-1}{k+1},$ and
$a^{\prime}=a-\binom{n_{k+1}-1}{k}$. Clearly, $a^{\prime}+b^{\prime}%
=n^{\prime}$, and $a^{\prime}<$ $\mathbf{\partial}^{k}\left(  n^{\prime
}\right)  $ since $a<$ $\mathbf{\partial}^{k+1}\left(  n\right)
=\binom{n_{k+1}-1}{k}+$ $\mathbf{\partial}^{k}\left(  n^{\prime}\right)  $. By
induction on $k$ the result holds for $a^{\prime},b^{\prime},n^{\prime}$, and
thus%
\begin{align*}
\mathbf{\partial}^{k+1}\left(  b\right)  +\mathbf{\partial}^{k}\left(
a\right)  -\mathbf{\partial}^{k+1}\left(  n\right)   &  =\binom{n_{k+1}-2}%
{k}+\mathbf{\partial}^{k}\left(  b^{\prime}\right)  +\binom{n_{k+1}-2}%
{k-1}+\mathbf{\partial}_{k-1}\left(  a^{\prime}\right) \\
&  -\binom{n_{k+1}-1}{k}+\mathbf{\partial}^{k}\left(  n^{\prime}\right) \\
&  =\mathbf{\partial}^{k}\left(  b^{\prime}\right)  +\mathbf{\partial}%
_{k-1}\left(  a^{\prime}\right)  -\mathbf{\partial}^{k}\left(  n^{\prime
}\right)  \geq0\text{.}%
\end{align*}
Case 2 is now proved.

It is only left to be shown that if $n=\binom{N}{k+1}$ for some $N\geq k+1,$
then the equality in (\ref{technical inequality}), i.e.
\begin{equation}
\mathbf{\partial}^{k}\left(  a\right)  +\mathbf{\partial}^{k+1}\left(
\binom{N}{k+1}-a\right)  =\mathbf{\partial}^{k+1}\left(  \binom{N}%
{k+1}\right)  , \label{equality}%
\end{equation}
occurs only when $a=0.$ If $N=k+1$ (and thus $a=0)$ or $a=0,$ the equality
trivially holds. Suppose that $N\geq k+2$, $a\geq1$, and (\ref{equality})
holds. Let $b=\binom{N}{k+1}-a$; as before, we consider two cases. First
suppose that $a_{k}<b_{k+1}$. Assume that $a$ and $b$ are the smallest
integers such that (\ref{equality}) is satisfied with $a\geq1$. If $a=1$, then
by (\ref{Pascal identity}),
\[
\binom{N}{k+1}-1=\binom{N-1}{k+1}+\binom{N-2}{k}+\cdots+\binom{N-k-1}{1}%
\]
is $(k+1)$-long. Thus, by Lemma \ref{short}, $\mathbf{\partial}^{k+1}%
(\binom{N}{k+1}-1)=\mathbf{\partial}^{k+1}\left(  \binom{N}{k+1}\right)
<1+\mathbf{\partial}^{k+1}(\binom{N}{k+1}-1)$, which is a contradiction. If
$a\geq2$, then we proceed as in Case 1 to get a contradiction. Now assume that
$b_{k+1}\leq a_{k}$. In this case, $a_{k}<N\leq b_{k+1}+1$ and following the
procedure of Case 2, we have that $a_{k}=b_{k+1}=N-1,$ a contradiction since
$\binom{N-1}{k}=\binom{a_{k}}{k}\leq a<\mathbf{\partial}^{k+1}\left(
\binom{N}{k+1}\right)  =\binom{N-1}{k}$.
\end{proof}

\textbf{Acknowledgements}. We are grateful to Hans Fetter and A. Paulina
Figueroa for their valuable help during the preparation of this paper as well
as to Eduardo Rivera-Campo for useful comments and suggestions.

\end{document}